\newtheorem{thm}{Theorem}
\newtheorem{pro}{Proposition}             
\newtheorem{cor}{Corollary}
\newcommand\keywords[1]{\quad\quad\, \textbf{Keywords}: #1}
\title{Two iterative formulas of largest and smallest singular value of nonsingular matrices}%
\author{\small {Shun Xu}\\
	{\small School of Mathematical Sciences, Tongji University, Shanghai, 200092, China}\\
	{\small  e-mail: shunxu1997@163.com}
}%
\date{}%
\begin{document}
	\maketitle
	\begin{abstract}
			We obtain an iterative formula that converges incrementally to the smallest singular value. Similarly, we obtain an iterative formula that converges decreasingly to the largest singular value.
	\end{abstract}
	\keywords{Singular values, Frobenius norm, Determinant.}
	
\section{Lower bound for the smallest singular value}
Let $M_{n}(\mathbb{C})(n\geqslant2)$ be the space of $n \times n$ complex matrices. Let $\sigma_{i}$ $(i=1, \cdots, n)$ be the singular values of $A \in M_{n}(\mathbb{C})$ which is nonsingular and suppose that $\sigma_{1} \geqslant \sigma_{2} \geqslant \cdots \geqslant \sigma_{n-1} \geqslant \sigma_{n} > 0$. For
$A=\left[a_{i j}\right] \in M_{n}(\mathbb{C})$, the Frobenius norm of $A$ is defined by
$$
\|A\|_{F}=\left(\sum_{i, j=1}^{n}\left|a_{i j}\right|^{2}\right)^{1 / 2}={\rm tr}\left( A^{H} A\right)^{\frac{1}{2}}
$$
where $A^{H}$ is the conjugate transpose of $A$. The relationship between the Frobenius norm and singular values is
$$
\|A\|_{F}^{2}=\sigma_{1}^{2}+\sigma_{2}^{2}+\cdots+\sigma_{n}^{2}
$$
It is well known that lower bounds for the smallest singular value $\sigma_{n}$ of a nonsingular matrix $A \in M_{n}(\mathbb{C})$ have many potential theoretical and practical applications \cite{1985Matrix,horn1994topics}. Yu and Gu \cite{yi1997note} obtained a lower bound for $\sigma_{n}$ as follows:
$$
\sigma_{n} \geqslant |\det A|\left(\frac{n-1}{\|A\|_{F}^{2}}\right)^{(n-1) / 2}=l>0
$$
The above inequality is also shown in \cite{piazza2002upper}.  In \cite{zou2012lower}, Zou improved the above inequality by showing that
$$
\sigma_{n } \geqslant|{\det} A|\left(\frac{n-1}{\|A\|_{F}^{2}-l^2}\right)^{(n-1) / 2}=l_0
$$
In \cite{lin2021some}, Lin and Xie improve a lower bound for
smallest singular value of matrices by showing that $a$ is the smallest positive solution to the equation
$$
x^{2}\left(\|A\|_{F}^{2}-x^{2}\right)^{n-1}=|{\det} A|^{2}(n-1)^{n-1} .
$$
and $\sigma_{n} \geqslant a>l_{0}$. Under certain conditions, $\sigma_{n}=a$ will hold. However, in many cases, $\sigma_{n}=a$ is not true. We give necessary and sufficient conditions such that $\sigma_{n}=a$ in Proposition \ref{pro1} .

\section{Main results} 
Let
$$
\begin{gathered}
	l=|\operatorname{det} A|\left(\frac{n-1}{\|A\|_{F}^{2}}\right)^{(n-1) / 2}, \\
	l_{0}=|\operatorname{det} A|\left(\frac{n-1}{\|A\|_{F}^{2}-l^{2}}\right)^{(n-1) / 2} .
\end{gathered}
$$
And $a$ is the smallest positive solution to the equation
$$
x^{2}\left(\|A\|_{F}^{2}-x^{2}\right)^{n-1}=|\operatorname{det} A|^{2}(n-1)^{n-1} .
$$
From \cite{lin2021some}, we have $\sigma_{n} \geqslant a>l_{0}>l>0$. Next, we give necessary and sufficient conditions such that $a=\sigma_{n}$.
\begin{pro}\label{pro1}
	Let a be the smallest positive solution to the equation
	$$
	x^{2}\left(\|A\|_{F}^{2}-x^{2}\right)^{n-1}=|\operatorname{det} A|^{2}(n-1)^{n-1} .
	$$
	then $a=\sigma_{n}$ if and only if
	$$
	\sigma_{1}=\sigma_{2}=\cdots=\sigma_{n-1}
	$$
\end{pro}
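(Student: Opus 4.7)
The plan is to reduce the biconditional to a strict monotonicity argument for the auxiliary function $f(x) := x^{2}(\|A\|_{F}^{2} - x^{2})^{n-1}$, combined with the equality case of the AM--GM step that underlies the bound $\sigma_{n} \geqslant a$.

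First I would revisit the derivation of $\sigma_{n} \geqslant a$ from \cite{lin2021some} in order to isolate its equality condition. From $|\det A|^{2} = \sigma_{n}^{2}\prod_{i=1}^{n-1}\sigma_{i}^{2}$ and AM--GM applied to $\sigma_{1}^{2},\dots,\sigma_{n-1}^{2}$, one obtains
$$\sigma_{n}^{2}\bigl(\|A\|_{F}^{2} - \sigma_{n}^{2}\bigr)^{n-1} \geqslant |\det A|^{2}(n-1)^{n-1},$$
that is, $f(\sigma_{n}) \geqslant |\det A|^{2}(n-1)^{n-1}$, with equality if and only if $\sigma_{1} = \sigma_{2} = \cdots = \sigma_{n-1}$. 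This is precisely the condition that should appear in the conclusion, so the remaining task is to convert the inequality $f(\sigma_{n}) \geqslant f(a)$ into an equivalence between $a = \sigma_{n}$ and this equality case.

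Next I would analyze the shape of $f$ on $(0, \|A\|_{F})$. A direct computation yields
$$f'(x) = 2x(\|A\|_{F}^{2} - x^{2})^{n-2}(\|A\|_{F}^{2} - nx^{2}),$$
so $f$ is strictly increasing on $(0, \|A\|_{F}/\sqrt{n}]$ and strictly decreasing on $[\|A\|_{F}/\sqrt{n}, \|A\|_{F})$. Since $n\sigma_{n}^{2} \leqslant \sigma_{1}^{2} + \cdots + \sigma_{n}^{2} = \|A\|_{F}^{2}$, the smallest singular value satisfies $\sigma_{n} \leqslant \|A\|_{F}/\sqrt{n}$, and by its definition as the \emph{smallest} positive root, $a$ also lies in $(0, \|A\|_{F}/\sqrt{n}]$.

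Finally, with both $a$ and $\sigma_{n}$ located in the strictly increasing portion of $f$, the relation $f(a) = |\det A|^{2}(n-1)^{n-1} \leqslant f(\sigma_{n})$ re-derives $a \leqslant \sigma_{n}$, and $a = \sigma_{n}$ holds exactly when $f(\sigma_{n}) = |\det A|^{2}(n-1)^{n-1}$, which by the equality condition recorded above is equivalent to $\sigma_{1} = \cdots = \sigma_{n-1}$. The main obstacle is the interval check: one must verify that both $a$ and $\sigma_{n}$ genuinely sit in $(0, \|A\|_{F}/\sqrt{n}]$, so that the \emph{strict} monotonicity of $f$ can be used to promote the AM--GM equality condition into an exact characterization of $a = \sigma_{n}$; once that location is pinned down, the biconditional follows at once.
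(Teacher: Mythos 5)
Your argument is correct, and it takes a genuinely different route from the paper. You prove both directions at once by studying the global shape of $f(x) = x^{2}(\|A\|_{F}^{2}-x^{2})^{n-1}$: you compute $f'(x) = 2x(\|A\|_{F}^{2}-x^{2})^{n-2}(\|A\|_{F}^{2}-nx^{2})$, locate the unique critical point $\|A\|_{F}/\sqrt{n}$, observe that $n\sigma_{n}^{2}\leqslant\|A\|_{F}^{2}$ forces $\sigma_{n}$ into the strictly increasing branch, and note that $a$ (being the smallest positive root of $f(x)=|\det A|^{2}(n-1)^{n-1}$ with $f(0)=0$ and $f$ strictly increasing up to the critical point) must also lie there. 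Once both points are in the strictly increasing branch, $a=\sigma_{n}$ is equivalent to $f(\sigma_{n})=|\det A|^{2}(n-1)^{n-1}$, which by the equality case of AM--GM applied to $\sigma_{1}^{2},\dots,\sigma_{n-1}^{2}$ is exactly $\sigma_{1}=\cdots=\sigma_{n-1}$. The paper, by contrast, splits the equivalence into two separate arguments: the forward direction is the same AM--GM equality computation you use, but the reverse direction substitutes $\sigma_{1}^{2}=\cdots=\sigma_{n-1}^{2}=c^{2}$, builds a different auxiliary function $x^{2}\bigl(1+\tfrac{\sigma_{n}^{2}-x^{2}}{c^{2}(n-1)}\bigr)^{n-1}-\sigma_{n}^{2}$, and proves it is strictly increasing only on $[0,\sigma_{n}]$ to conclude $\sigma_{n}$ is its smallest positive zero. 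Your version buys a unified, shorter proof and makes the role of the critical point $\|A\|_{F}/\sqrt{n}$ explicit, which also clarifies in one stroke why $a\leqslant\sigma_{n}$ and why the smallest root is automatically on the increasing branch; the paper's reverse direction is more ad hoc but avoids any discussion of $f$ outside the interval $[0,\sigma_{n}]$.
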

\begin{proof}
	$\Rightarrow:$  If $a=\sigma_{n}$, then
	$$
	\sigma_{n}^{2}\left(\|A\|_{F}^{2}-\sigma_{n}^{2}\right)^{n-1}=|\operatorname{det} A|^{2}(n-1)^{n-1} .
	$$
	Since $\|A\|_{F}^{2}=\sigma_{1}^{2}+\sigma_{2}^{2}+\cdots+\sigma_{n}^{2}$ and $|\operatorname{det} A|^{2}=\sigma_{1}^{2} \sigma_{2}^{2} \cdots \sigma_{n}^{2}$, we have
	$$
	\begin{gathered}
		\sigma_{n}^{2}\left(\sigma_{1}^{2}+\sigma_{2}^{2}+\cdots+\sigma_{n-1}^{2}\right)^{n-1}=\sigma_{1}^{2} \sigma_{2}^{2} \cdots \sigma_{n}^{2}(n-1)^{n-1} . \\
		\left(\frac{\sigma_{1}^{2}+\sigma_{2}^{2}+\cdots+\sigma_{n-1}^{2}}{n-1}\right)^{n-1}=\sigma_{1}^{2} \sigma_{2}^{2} \cdots \sigma_{n-1}^{2}
	\end{gathered}
	$$
	By the arithmetic-geometric mean inequality, the above equation holds if and only if
	$$
	\sigma_{1}^{2}=\sigma_{2}^{2}=\cdots=\sigma_{n-1}^{2}
	$$
	if and only if
	$$
	\sigma_{1}=\sigma_{2}=\cdots=\sigma_{n-1}
	$$
	
	$\Leftarrow:$ Let
	$$
	\sigma_{1}^{2}=\sigma_{2}^{2}=\cdots=\sigma_{n-1}^{2}=c^{2}(c>0) .
	$$
	Then $a$ is the smallest positive solution to the equation
	$$
	x^{2}\left(c^{2}(n-1)+\sigma_{n}^{2}-x^{2}\right)^{n-1}=c^{2(n-1)} \sigma_{n}^{2}(n-1)^{n-1}
	$$
	if and only if $a$ is the smallest positive zero point of
	$$
	f(x)= x^2\left(1+\frac{\sigma_{n}^{2}-x^{2}}{c^{2}(n-1)}\right)^{n-1}-\sigma_{n}^{2} .
	$$
	Next, we proof $a=\sigma_{n}$. Obviously, we can see that $f\left(\sigma_{n}\right)=0$ and $f(0)=-\sigma_{n}^{2}<0$. Next, we prove that $f(x)$ is an strictly increasing function on $\left[0, \sigma_{n}\right]$. Taking the derivative of $f(x)$, we can get
	$$
	f^{\prime}(x)=\left(1+\frac{\sigma_{n}^{2}-x^{2}}{c^{2}(n-1)}\right)^{n-2} \frac{2 x}{c^{2}(n-1)}\left(c^{2}(n-1)+\sigma_{n}^{2}-n x^{2}\right) .
	$$
	$\forall x_{0} \in\left(0, \sigma_{n}\right)$, we have
	$$
	c^{2}(n-1)+\sigma_{n}^{2}-n x_{0}^{2} \geqslant n \sigma_{n}^{2}-n x_{0}^{2}>0 .
	$$
	We have $f^{\prime}\left(x_{0}\right)>0$. Therefore $f(x)$ is an strictly increasing function on $\left[0, \sigma_{n}\right]$ and $\sigma_{n}$ is the smallest positive zero point of
	$$
	f(x)= x^2\left(1+\frac{\sigma_{n}^{2}-x^{2}}{c^{2}(n-1)}\right)^{n-1}-\sigma_{n}^{2}
	$$
	Therefore, $\sigma_{n}$ is the smallest positive solution to the equation
	$$
	x^{2}\left(c^{2}(n-1)+\sigma_{n}^{2}-x^{2}\right)^{n-1}=c^{2(n-1)} \sigma_{n}^{2}(n-1)^{n-1}.
	$$
	We get $a=\sigma_{n}.$
\end{proof}
In the above special condition, $a$ can be equal to $\sigma_{n}$, which is not general. Next, we give our main theorem. We give an iterative formula for the smallest singular value, which converges incrementally to the smallest singular value.
\begin{thm}\label{thm1}
	Let $A \in M_{n}(\mathbb{C})$ be nonsingular and $0<a_{1} \leqslant \sigma_{n}$ and
	$$
	a_{k+1}=\left(a_{k}^{2}+\left|\operatorname{det}\left(a_{k}^{2} I_{n}-A^{H} A\right)\right|\left(\frac{n-1}{\|A\|_{F}^{2}-(n-1) a_{k}^{2}}\right)^{n-1}\right)^{1 / 2}, k=1,2, \cdots .
	$$
	Then $\sigma_{n} \geqslant a_{k+1} \geqslant a_{k}>0(k=1,2, \cdots)$ and $\lim _{k \rightarrow+\infty} a_{k}=\sigma_{n}$.
\end{thm}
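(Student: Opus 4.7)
The plan is to establish three things in order: (i) the upper bound $a_k \le \sigma_n$ by induction on $k$, (ii) the monotonicity $a_{k+1}\ge a_k$, and (iii) the identification of the limit. Item (i) is the heart of the matter; items (ii) and (iii) follow quickly from it.

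For (i), the base case $a_1\le\sigma_n$ is part of the hypothesis. Assume $a_k\le\sigma_n$. Then $\sigma_i^2-a_k^2\ge 0$ for every $i$, so the eigenvalues of $a_k^2 I_n - A^HA$ are all nonpositive and
$$|\det(a_k^2 I_n - A^HA)|=\prod_{i=1}^{n}(\sigma_i^2-a_k^2).$$
The inequality $a_{k+1}^2\le\sigma_n^2$ then amounts to
$$\prod_{i=1}^{n}(\sigma_i^2-a_k^2)\left(\frac{n-1}{\|A\|_F^2-(n-1)a_k^2}\right)^{n-1}\le \sigma_n^2-a_k^2.$$
If $a_k=\sigma_n$ both sides vanish; otherwise I would divide by $\sigma_n^2-a_k^2>0$ and reduce the claim to
$$\prod_{i=1}^{n-1}(\sigma_i^2-a_k^2)\le\left(\frac{\|A\|_F^2-(n-1)a_k^2}{n-1}\right)^{n-1},$$
which follows from the arithmetic–geometric mean inequality applied to the $n-1$ nonnegative numbers $\sigma_i^2-a_k^2$, together with the bound $\sum_{i=1}^{n-1}(\sigma_i^2-a_k^2)=\|A\|_F^2-\sigma_n^2-(n-1)a_k^2\le\|A\|_F^2-(n-1)a_k^2$. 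Along the way one also checks that the denominator stays strictly positive, since $\|A\|_F^2\ge n\sigma_n^2>(n-1)\sigma_n^2\ge (n-1)a_k^2$, so the recursion is well defined at every step.

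With (i) in hand, (ii) is immediate: the term added to $a_k^2$ in the recursion is a product of nonnegative factors, so $a_{k+1}^2\ge a_k^2$, and $a_k\ge a_1>0$ by induction. The sequence $(a_k)$ is therefore monotone nondecreasing and bounded above by $\sigma_n$, hence converges to some $a_\infty\in(0,\sigma_n]$. For (iii), taking the limit in the recursion forces
$$|\det(a_\infty^2 I_n - A^HA)|\left(\frac{n-1}{\|A\|_F^2-(n-1)a_\infty^2}\right)^{n-1}=0,$$
and since the denominator is positive this gives $\det(a_\infty^2 I_n - A^HA)=0$. Thus $a_\infty^2$ is an eigenvalue of $A^HA$, i.e.\ $a_\infty$ equals some $\sigma_i$; combined with $a_\infty\le\sigma_n$ this forces $a_\infty=\sigma_n$.

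The only real obstacle is finding the right way to deploy AM–GM in step (i); once the cancellation of the factor $\sigma_n^2-a_k^2$ is spotted, the remaining manipulations are mechanical. The rest of the argument is standard monotone-convergence bookkeeping.
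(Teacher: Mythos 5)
Your proposal is correct and follows essentially the same route as the paper: apply AM--GM to the $n-1$ factors $\sigma_i^2-\lambda$, cancel the $\sigma_n^2-\lambda$ factor against $|\det(\lambda I_n-A^HA)|$, bound $\sum_{i<n}\sigma_i^2$ by $\|A\|_F^2$, and then run a standard monotone-convergence argument identifying the limit as an eigenvalue of $A^HA$. The only cosmetic difference is that the paper first establishes the inequality $\sigma_n\geqslant\bigl(\lambda+|\det(\lambda I_n-A^HA)|\,(\tfrac{n-1}{\|A\|_F^2-(n-1)\lambda})^{n-1}\bigr)^{1/2}$ for all $\lambda\in[0,\sigma_n^2]$ (handling the endpoint $\lambda=\sigma_n^2$ by a one-sided limit), whereas you prove the inductive step directly and dispose of the edge case $a_k=\sigma_n$ by noting both sides vanish, which is a slightly cleaner way to reach the same inequality.
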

\begin{proof}
	Let $0 \leqslant \lambda<\sigma_{n}^{2}$, by the arithmetic-geometric mean inequality, we have
	$$
	\left(\sigma_{1}^{2}-\lambda\right)\left(\sigma_{2}^{2}-\lambda\right) \cdots\left(\sigma_{n-1}^{2}-\lambda\right) \leqslant\left(\frac{\sigma_{1}^{2}+\cdots+\sigma_{n-1}^{2}-(n-1) \lambda}{n-1}\right)^{n-1}
	$$
	Since
	$$
	\begin{aligned}
		\left(\sigma_{1}^{2}-\lambda\right)\left(\sigma_{2}^{2}-\lambda\right) \cdots\left(\sigma_{n-1}^{2}-\lambda\right) &=\frac{\left(\sigma_{1}^{2}-\lambda\right)\left(\sigma_{2}^{2}-\lambda\right) \cdots\left(\sigma_{n}^{2}-\lambda\right)}{\sigma_{n}^{2}-\lambda} \\
		&=\frac{\left|\operatorname{det}\left(\lambda I_{n}-A^{H} A\right)\right|}{\sigma_{n}^{2}-\lambda}
	\end{aligned}
	$$
	we have
	$$
	\frac{\left|\operatorname{det}\left(\lambda I_{n}-A^{H} A\right)\right|}{\sigma_{n}^{2}-\lambda} \leqslant\left(\frac{\sigma_{1}^{2}+\cdots+\sigma_{n-1}^{2}-(n-1) \lambda}{n-1}\right)^{n-1}
	$$
	$$
	\begin{gathered}
		\sigma_{n}^{2} \geqslant \lambda+\left|\operatorname{det}\left(\lambda I_{n}-A^{H} A\right)\right|\left(\frac{n-1}{\sigma_{1}^{2}+\cdots+\sigma_{n-1}^{2}-(n-1) \lambda}\right)^{n-1} \\
		\sigma_{n} \geqslant\left(\lambda+\left|\operatorname{det}\left(\lambda I_{n}-A^{H} A\right)\right|\left(\frac{n-1}{\sigma_{1}^{2}+\cdots+\sigma_{n-1}^{2}+\sigma_{n}^{2}-(n-1) \lambda}\right)^{n-1}\right)^{1 / 2} \\
		\sigma_{n} \geqslant\left(\lambda+\left|\operatorname{det}\left(\lambda I_{n}-A^{H} A\right)\right|\left(\frac{n-1}{\|A\|_{F}^{2}-(n-1) \lambda}\right)^{n-1}\right)^{1 / 2}
	\end{gathered}
	$$
	Let $\lambda \rightarrow \sigma_{n}^{2-}\left(\lambda\right.$ tends to $\sigma_{n}^{2}$ from the left). We get that the above inequality is also true for $\lambda=\sigma_{n}^{2}$. Therefore, for $0 \leqslant \lambda \leqslant \sigma_{n}^{2}$, we have
	\begin{equation}\label{eq1}
		\sigma_{n} \geqslant\left(\lambda+\left|\operatorname{det}\left(\lambda I_{n}-A^{H} A\right)\right|\left(\frac{n-1}{\|A\|_{F}^{2}-(n-1) \lambda}\right)^{n-1}\right)^{1 / 2}
	\end{equation}
	We show by induction on $k$ that
	$$
	\sigma_{n} \geqslant a_{k+1} \geqslant a_{k}>0
	$$
	We have $\sigma_{n} \geqslant a_{1}>0$. In equation \ref{eq1}, let $\lambda=a_{1}^{2}$, we have
	$$
	\sigma_{n} \geqslant a_{2}=\left(a_{1}^{2}+\left|\operatorname{det}\left(a_{1}^{2} I_{n}-A^{H} A\right)\right|\left(\frac{n-1}{\|A\|_{F}^{2}-(n-1) a_{1}^{2}}\right)^{n-1}\right)^{1 / 2} \geqslant a_{1}>0
	$$
	Assume that our claim is true for $k=m$, that is $\sigma_{n} \geqslant a_{m+1} \geqslant a_{m}>0$. Now we consider the case when $k=m+1$. In equation \ref{eq1}, let $\lambda=a_{m+1}^{2}$, we have
	$$
	\sigma_{n} \geqslant a_{m+2}=\left(a_{m+1}^{2}+\left|\operatorname{det}\left(a_{m+1}^{2} I_{n}-A^{H} A\right)\right|\left(\frac{n-1}{\|A\|_{F}^{2}-(n-1) a_{m+1}^{2}}\right)^{n-1}\right)^{1 / 2} \geqslant a_{m+1}>0
	$$
	Hence $\sigma_{n} \geqslant a_{m+2} \geqslant a_{m+1}>0$. This proves $\sigma_{n} \geqslant a_{k+1} \geqslant a_{k}>0(k=1,2, \cdots)$. By the well known monotone convergence theorem, $\lim _{k \rightarrow \infty} a_{k}$ exists. Let $\lim _{k \rightarrow \infty} a_{k}=$ $\sigma$, then
	$$
	\sigma=\left(\sigma^{2}+\left|\operatorname{det}\left(\sigma^{2} I_{n}-A^{H} A\right)\right|\left(\frac{n-1}{\|A\|_{F}^{2}-(n-1) \sigma^{2}}\right)^{n-1}\right)^{1 / 2}, k=1,2, \cdots
	$$
	We have
	$$
	\left|\operatorname{det}\left(\sigma^{2} I_{n}-A^{H} A\right)\right|\left(\frac{n-1}{\|A\|_{F}^{2}-(n-1) \sigma^{2}}\right)^{n-1}=0
	$$
	Since
	$$
	\frac{n-1}{\|A\|_{F}^{2}-(n-1) \sigma^{2}} \neq 0
	$$
	we have
	$$
	\operatorname{det}\left(\sigma^{2} I_{n}-A^{H} A\right)=0 .
	$$
	We get that $\sigma^{2}$ is the eigenvalue of $A^{H} A$. Since $\sigma_{n}^{2}$ is the smallest eigenvalue of $A^{H} A$, we have $\sigma^{2} \geqslant \sigma_{n}^{2}$. According to the definition of $\sigma$, we have $\sigma \leqslant \sigma_{n}$. Therefore, $\sigma^{2} \leqslant \sigma_{n}^{2}$ and we get $\sigma=\sigma_{n}$. Hence $\lim _{k \rightarrow+\infty} a_{k}=\sigma_{n}$.
\end{proof}

From Theorem \ref{thm1}, we can see that as long as there is a lower bound of $\sigma_{n}$ (we set it to $b$ ) and let $a_{1}=b$ in Theorem \ref{thm1} , we can get a better lower bound than $b$. For example, we bring the lower bound of Lin and Xie \cite{lin2021some} into our Theorem \ref{thm1} to obtain the following results.
\begin{cor}\label{cor1}
	Let a be the smallest positive solution to the equation
	$$
	x^{2}\left(\|A\|_{F}^{2}-x^{2}\right)^{n-1}=|\operatorname{det} A|^{2}(n-1)^{n-1} .
	$$
	Let $a_{1}=a$ and
	$$
	a_{k+1}=\left(a_{k}^{2}+\left|\operatorname{det}\left(a_{k}^{2} I_{n}-A^{H} A\right)\right|\left(\frac{n-1}{\|A\|_{F}^{2}-(n-1) a_{k}^{2}}\right)^{n-1}\right)^{1 / 2}, k=1,2, \cdots
	$$
	Then $\sigma_{n} \geqslant a_{k+1} \geqslant a_{k} \geqslant a>0(k=1,2, \cdots)$ and $\lim _{k \rightarrow+\infty} a_{k}=\sigma_{n}$.
\end{cor}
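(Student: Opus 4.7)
The plan is to observe that this statement is essentially an instance of Theorem \ref{thm1} applied to a particular starting value, so the proof will be short and the main work is verifying the hypotheses of Theorem \ref{thm1}.

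First I would check that $a_1 = a$ satisfies the precondition $0 < a_1 \leqslant \sigma_n$ required by Theorem \ref{thm1}. The positivity $a > 0$ holds by definition, since $a$ is the \emph{smallest positive} solution of the equation $x^{2}(\|A\|_{F}^{2}-x^{2})^{n-1}=|\det A|^{2}(n-1)^{n-1}$. The upper bound $a \leqslant \sigma_n$ is precisely the Lin--Xie inequality $\sigma_n \geqslant a$ already recorded in the excerpt (from \cite{lin2021some} and recalled in the discussion preceding Proposition \ref{pro1}), so I can simply cite it.

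With the hypothesis verified, I would invoke Theorem \ref{thm1} directly: it yields at once the two advertised conclusions $\sigma_n \geqslant a_{k+1} \geqslant a_k > 0$ for all $k$ and $\lim_{k\to\infty} a_k = \sigma_n$. To obtain the extra inequality $a_k \geqslant a$ stated in the corollary, I would just note that the monotonicity chain $a_k \geqslant a_{k-1} \geqslant \cdots \geqslant a_1 = a$ follows from the monotonicity already established by Theorem \ref{thm1}.

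There is no real obstacle in this argument; the only thing to be careful about is not to reprove Theorem \ref{thm1}. The role of the corollary is purely to package the theorem with the best currently known closed-form lower bound as the initial guess, thereby guaranteeing that every iterate is at least as good as the Lin--Xie bound $a$.
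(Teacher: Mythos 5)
Your proposal is correct and matches the paper's proof exactly: both verify the hypothesis $0 < a \leqslant \sigma_n$ via the Lin--Xie bound and then invoke Theorem \ref{thm1} as a special case. You simply spell out the routine details (positivity of $a$, the chain $a_k \geqslant a_1 = a$) that the paper leaves implicit.
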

\begin{proof}
	From \cite{lin2021some}, we have $\sigma_{n} \geqslant a>0$. We know Corollary \ref{cor1} is a special case of Theorem \ref{thm1}.
\end{proof} 

We give the  iterative formula for the smallest singular value:
\[
a_{k+1}=\left(a_k^2+|\det(a_k^2 I_n-A^HA)|\left(\frac{n-1}{\|A\|_F^2-(n -1) a_k^2}\right)^{n-1}\right)^{1/2}, k=1,2,\cdots,
\]
where $a_1=a$, which converges incrementally to the smallest singular value.
Similarly, we can give an iterative formula that converges decreasingly to the largest singular value.

\begin{thm}\label{thm2}
	Let $A \in M_{n}(\mathbb{C})$ be nonsingular, $a_1\geqslant\sigma_1$. Assume
	\[
	a_{k+1}=\left(a_k^2-|\det(a_k^2 I_n-A^HA)|\left(\frac{n-1}{(n+1) a_k^2-\| A\|_F^2}\right)^{n-1}\right)^{1/2}, k=1,2,\cdots.
	\]
	Then $\sigma_1\leqslant a_{k+1}\leqslant a_k(k=1,2,\cdots)$ and $\lim_{k\to +\infty}a_k=\sigma_1$.
\end{thm}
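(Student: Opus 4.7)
The strategy mirrors the proof of Theorem~\ref{thm1} with the roles of smallest and largest singular values swapped. Instead of applying the arithmetic--geometric mean inequality to the $n-1$ positive quantities $\sigma_i^2 - \lambda$ for $i = 1,\dots,n-1$ on the regime $0 \leq \lambda < \sigma_n^2$, I would apply it to the $n-1$ non-negative quantities $\lambda - \sigma_i^2$ for $i = 2,\dots,n$ on the regime $\lambda \geq \sigma_1^2$.

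Concretely, for $\lambda > \sigma_1^2$ the AM--GM inequality yields
$$\prod_{i=2}^{n} (\lambda - \sigma_i^2) \leq \left(\frac{(n-1)\lambda + \sigma_1^2 - \|A\|_F^2}{n-1}\right)^{n-1},$$
and the left-hand side equals $|\det(\lambda I_n - A^HA)|/(\lambda - \sigma_1^2)$ since every factor in the product is non-negative. To eliminate the residual $+\sigma_1^2$ on the right, I would use $\sigma_1^2 \leq \lambda$ to bound
$$(n-1)\lambda + \sigma_1^2 - \|A\|_F^2 \leq n\lambda - \|A\|_F^2 \leq (n+1)\lambda - \|A\|_F^2,$$
the final quantity being strictly positive since $\|A\|_F^2 \leq n\sigma_1^2 \leq n\lambda < (n+1)\lambda$. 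Rearranging gives the key inequality
$$\sigma_1^2 \leq \lambda - |\det(\lambda I_n - A^HA)|\left(\frac{n-1}{(n+1)\lambda - \|A\|_F^2}\right)^{n-1},$$
valid for every $\lambda \geq \sigma_1^2$ (the boundary case $\lambda = \sigma_1^2$ is trivial, as the correction term vanishes).

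The remainder follows the template of Theorem~\ref{thm1}. An induction on $k$, feeding $\lambda = a_k^2$ into the key inequality, establishes $\sigma_1 \leq a_{k+1} \leq a_k$: the lower bound is the inequality itself, and the upper bound holds because the subtracted correction term is non-negative. Monotone convergence then yields a limit $\sigma \geq \sigma_1$, satisfying
$$\sigma^2 = \sigma^2 - |\det(\sigma^2 I_n - A^HA)|\left(\frac{n-1}{(n+1)\sigma^2 - \|A\|_F^2}\right)^{n-1};$$
since the denominator is strictly positive, this forces $\det(\sigma^2 I_n - A^HA) = 0$, so $\sigma^2$ is an eigenvalue of $A^HA$. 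Combined with $\sigma \geq \sigma_1$ and the fact that $\sigma_1^2$ is the largest eigenvalue of $A^HA$, one concludes $\sigma = \sigma_1$.

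The main obstacle is the second step: the AM--GM bound naturally leaves a $+\sigma_1^2$ residue that has no counterpart in the proof of Theorem~\ref{thm1}, where the analogous residue $\sigma_n^2$ combined cleanly with $\sigma_1^2 + \cdots + \sigma_{n-1}^2$ to recover the full $\|A\|_F^2$. Absorbing this stray residue forces the slightly wasteful replacement $\sigma_1^2 \leq \lambda$, and it is precisely this step that accounts for the appearance of $(n+1)$ in the denominator of the iteration in place of the sharper $n$.
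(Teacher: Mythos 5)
Your proposal is correct and follows essentially the same route as the paper: apply AM--GM to the $n-1$ quantities $\lambda - \sigma_i^2$ ($i \geq 2$), rewrite the product via $|\det(\lambda I_n - A^H A)|/(\lambda - \sigma_1^2)$, and absorb the residual $\sigma_1^2$ in the denominator using $\sigma_1^2 \leq \lambda$ before passing to the iteration and monotone convergence. The only cosmetic difference is at the boundary $\lambda = \sigma_1^2$: you observe directly that the correction term vanishes there because $\det(\sigma_1^2 I_n - A^H A) = 0$, whereas the paper reaches the same conclusion by letting $\lambda \to \sigma_1^{2+}$; both are fine, and your explicit note that $(n+1)\lambda - \|A\|_F^2 > 0$ (while $n\lambda - \|A\|_F^2$ can vanish at $\lambda = \sigma_1^2$) correctly pinpoints why the $(n+1)$ factor is needed.
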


\begin{proof}
	Set $\lambda>\sigma_1^2$, according to the arithmetic geometric mean inequality, we can get
	\[
	\left(\lambda-\sigma_{2}^{2}\right)\left(\lambda-\sigma_{3}^{2}\right) \cdots\left(\lambda-\sigma_{n}^ 2\right) \leqslant\left(\frac{(n-1) \lambda-(\sigma_2^2 +\cdots+\sigma_{n}^{2})}{n-1}\right)^{n -1}.
	\]
	Since
	$$
	\begin{aligned}
		\left(\lambda-\sigma_{2}^{2}\right)\left(\lambda-\sigma_{3}^{2}\right) \cdots\left(\lambda-\sigma_{n}^ 2\right)
		&=\frac{\left(\lambda-\sigma_{1}^{2}\right)\left(\lambda-\sigma_{2}^{2}\right) \cdots\left(\lambda-\sigma_{n}^2\right)}{\lambda-\sigma_1^2}\\
		&=\frac{|\det(\lambda I_n-A^HA)|}{\lambda-\sigma_1^2}
	\end{aligned},
	$$
	we can get
	\[
	\frac{|\det(\lambda I_n-A^HA)|}{\lambda-\sigma_1^2}\leqslant\left(\frac{(n-1) \lambda-(\sigma_2^2 +\cdots+ \sigma_{n}^{2})}{n-1}\right)^{n-1}
	\]
	$$
	\begin{aligned}
		\sigma_{1}^{2} &\leqslant \lambda-|\operatorname{det}\left( \lambda-A^{H} A\right) \mid\left(\frac{n-1}{( n-1) \lambda+\sigma_{1}^{2}-\|A\|_{F}^{2}}\right)^{n-1} \\
		& \leqslant \lambda-\left|\operatorname{det}\left(\lambda-A^{H} A\right)\right|\left(\frac{n-1}{(n+1) \lambda -\|A\|_{F}^{2}}\right)^{n-1}
	\end{aligned}
	$$
	$$
	\sigma_1\leqslant\left(\lambda-\left|\operatorname{det}\left(\lambda-A^{H} A\right)\right|\left(\frac{n-1}{(n+ 1) \lambda-\|A\|_{F}^{2}}\right)^{n-1}\right)^{1/2}
	$$
	Let $\lambda\to {\sigma_1^2}^+$($\lambda$ tend to $\sigma_1^2$ from the right). We get that the above equation is also true for $\lambda=\sigma_1^2$. So, for $\lambda\geqslant\sigma_1^2$, we have
	\begin{equation}\label{eq2}
		\sigma_1\leqslant\left(\lambda-\left|\operatorname{det}\left(\lambda-A^{H} A\right)\right|\left(\frac{n-1}{(n+ 1) \lambda-\|A\|_{F}^{2}}\right)^{n-1}\right)^{1/2}
	\end{equation}
	We use  induction on $k$ to prove that:
	\[
	\sigma_1\leqslant a_{k+1}\leqslant a_k
	\]
	For $k=1$, $\sigma_1\leqslant a_1$ can be obtained from the condition. In the equation (\ref{eq2}), taking $\lambda=a_1^2$, we can get
	\[
	\sigma_1\leqslant a_2=\left(a_1^2-\left|\operatorname{det}\left(a_1^2-A^{H} A\right)\right|\left(\frac{n-1} {(n+1) a_1^2-\|A\|_{F}^{2}}\right)^{n-1}\right)^{1/2}\leqslant a_1
	\]
	Suppose our conclusion holds for $k=m$, that is, $\sigma_{1} \leqslant a_{m+1} \leqslant a_{m}$. Now let's consider the case of $k=m+1$. In equation (\ref{eq2}), let $\lambda=a_{m+1}^2$, we can get
	\[
	\sigma_1\leqslant a_{m+2}=\left(a_{m+1}^2-\left|\operatorname{det}\left(a_{m+1}^2-A^{H} A\right)\right|\left(\frac{n-1}{(n+1) a_{m+1}^2-\|A\|_{F}^{2}}\right)^{n -1}\right)^{1/2}\leqslant a_{m+1}
	\]
	So $\sigma_1\leqslant a_{k+1}\leqslant a_{k}$. This proves that $\sigma_1\leqslant a_{k+1}\leqslant a_{k}(k=1,2,\cdots)$. From the monotone convergence theorem, $\lim_{k\to \infty}a_k$ exists. Let $\lim_{k\to \infty}a_k=\sigma$, then
	\[
	\sigma=\left(\sigma^2-|\det(\sigma^2 I_n-A^HA)|\left(\frac{n-1}{(n+1) \sigma^2-\|A \|_F^2}\right)^{n-1}\right)^{1/2}
	\]
	We have
	\[
	|\det(\sigma^2 I_n-A^HA)|\left(\frac{n-1}{(n+1) \sigma^2-\|A\|_F^2}\right)^{ n-1}=0.
	\]
	because 
	\[
	\frac{n-1}{(n+1) \sigma^2-\|A\|_F^2}\not=0,
	\]
	we can get
	\[
	\det(\sigma^2 I_n-A^HA)=0.
	\]
	So $\sigma^2$ is the eigenvalue of $A^HA$. Because $\sigma_1^2$ is the largest eigenvalue of $A^HA$, there is $\sigma^2\leqslant \sigma_1^2$. According to the definition of $\sigma$, we have $\sigma\geqslant \sigma_1$, so $\sigma^2\geqslant\sigma_1^2$. We get $\sigma=\sigma_1$, so $\lim_{k\to +\infty}a_k=\sigma_1$.
\end{proof}
The largest singular value has an obvious upper bound. Because $\sigma_{1}^2\leqslant\sigma_{1}^2+\cdots+\sigma_n^2=\|A\|_F^2$, so $\sigma_{1}<\|A\|_F $. We give an iterative formula for the largest singular value:
\[
a_{k+1}=\left(a_k^2-|\det(a_k^2 I_n-A^HA)|\left(\frac{n-1}{(n+1) a_k^2-\| A\|_F^2}\right)^{n-1}\right)^{1/2}, k=1,2,\cdots,
\]
where $a_1=\|A\|_F$, which converges decreasingly to $\sigma_1$.

\bibliographystyle{plain} 
\bibliography{refs}
	
\end{document}